\setlist[enumerate]{
label=\textnormal{({\roman*})},
ref={\roman*}}
\def\th@plain{%
  \thm@notefont{}
  \itshape 
}
\def\th@definition{%
  \thm@notefont{}
  \normalfont 
}
\newtheorem*{rep@theorem}{\rep@title}
\newcommand{\newreptheorem}[2]{%
\newenvironment{rep#1}[1]{%
 \def\rep@title{#2 \ref{##1}}%
 \begin{rep@theorem}}%
 {\end{rep@theorem}}}
\newtheorem{theorem}{Theorem}
\newtheorem{lemma}{Lemma}[section]
\newtheorem{proposition}[lemma]{Proposition}
\theoremstyle{remark}
\theoremstyle{definition}
\newcommand*{\da@rightarrow}{\mathchar"0\hexnumber@\symAMSa 4B }
\newcommand*{\da@leftarrow}{\mathchar"0\hexnumber@\symAMSa 4C }
\newcommand*{\xdashrightarrow}[2][]{%
  \mathrel{%
    \mathpalette{\da@xarrow{#1}{#2}{}\da@rightarrow{\,}{}}{}%
  }%
}
\newcommand{\xdashleftarrow}[2][]{%
  \mathrel{%
    \mathpalette{\da@xarrow{#1}{#2}\da@leftarrow{}{}{\,}}{}%
  }%
}
\newcommand*{\da@xarrow}[7]{%
  \sbox0{$\ifx#7\scriptstyle\scriptscriptstyle\else\scriptstyle\fi#5#1#6\m@th$}%
  \sbox2{$\ifx#7\scriptstyle\scriptscriptstyle\else\scriptstyle\fi#5#2#6\m@th$}%
  \sbox4{$#7\dabar@\m@th$}%
  \dimen@=\wd0 %
  \ifdim\wd2 >\dimen@
    \dimen@=\wd2 %
  \fi
  \count@=2 %
  \def\da@bars{\dabar@\dabar@}%
  \@whiledim\count@\wd4<\dimen@\do{%
    \advance\count@\@ne
    \expandafter\def\expandafter\da@bars\expandafter{%
      \da@bars
      \dabar@ 
    }%
  }%
  \mathrel{#3}%
  \mathrel{%
    \mathop{\da@bars}\limits
    \ifx\\#1\\%
    \else
      _{\copy0}%
    \fi
    \ifx\\#2\\%
    \else
      ^{\copy2}%
    \fi
  }%
  \mathrel{#4}%
}
\newcommand{\overrightharpoon}{%
  \mathpalette{\overarrow@\rightharpoonfill@}}
\def\rightharpoonfill@{\arrowfill@\relbar\relbar\rightharpoonup}
\newcommand{\osh}{\mathpalette{\overarrowsmall@\rightharpoonfill@}}
\def\rightharpoonfill@{\arrowfill@\relbar\relbar\rightharpoonup}
\newcommand{\overarrowsmall@}[3]{%
  \vbox{%
    \ialign{%
      ##\crcr
      #1{\smaller@style{#2}}\crcr
      \noalign{\nointerlineskip}%
      $\m@th\hfil#2#3\hfil$\crcr
    }%
  }%
}
\def\smaller@style#1{%
  \ifx#1\displaystyle\scriptstyle\else
    \ifx#1\textstyle\scriptstyle\else
      \scriptscriptstyle
    \fi
  \fi
}
\newcommand{\mylabel}[2]{#2\def\@currentlabel{#2}\label{#1}}
\DeclareMathOperator{\Ec}{\mathcal{E}} 
\DeclareMathOperator{\Gc}{\mathcal{G}} 
\DeclareMathOperator{\Zb}{\mathbb{Z}} 
\title{A rotor configuration with maximum escape rate}
\author{Swee Hong Chan}
 \thanks{Department of Mathematics, Cornell University. Partially supported by NSF grant DMS-1455272. Email: \url{sweehong@math.cornell.edu}.}
\begin{document}
 
 \begin{abstract}
 Rotor walk is a deterministic analogue of simple random walk.
 For any given graph,
 we construct a rotor configuration for which the escape rate of the corresponding rotor walk is equal to
 the escape rate of simple random walk,
 and thus answer a question of Florescu, Ganguly, Levine, and Peres (2014). 
 \end{abstract}

\keywords{rotor walk, rotor-router, escape rate, transience density}

\subjclass[2010]{05C81, 82C20} 

\maketitle
 
 \section{Introduction}\label{section: intro}
Let $G:=(V,E)$  be a graph that is connected, simple (i.e., no loops and multiple edges) and locally finite (i.e., every vertex has finitely many neighbors).
In a rotor walk~\cite{WLB96, PDDK96}, 
each vertex has a \emph{rotor}, which is an outgoing edge of the vertex. 
All of the rotors together constitute a \emph{rotor configuration}, which is encoded by a function $\rho$ that maps every vertex of $G$ to one of its outgoing edges.
To each vertex $x$ we  assign a fixed \emph{rotor mechanism}, which is a cyclic ordering on the set of outgoing edges $\Ec_x$ of $x$,
and is encoded by a bijection 
  $m_x: \Ec_x \to \Ec_x$  that has only one orbit.

 Rotor walk evolves in the following manner.
A {particle} is initially  located at a fixed  vertex $o$.
At each time step, 
the rotor at the particle's current location is first incremented to the next edge in the cyclic order, and the particle moves to the target vertex of the new rotor.


Propp~\cite{Propp03} proposed  rotor walk as a derandomized version of  simple random walk, and 
this naturally invited a comparison between the two walks.
One such comparison is given by  the following experiment.
Start with an  initial rotor configuration $\rho$, and with $n$ particles initially located at $o$.
At each time step, each of these $n$ particles will take turns in performing one step of rotor walk,
and  the particle is removed if it ever returns to $o$.
Denote by $I(\rho,n)$ the number of particles that never return to $o$.

%
Schramm~\cite{HP10, FGLP14} showed that the \emph{escape rate} of rotor walk is always bounded  above by the escape rate of  simple random walk. That is to say, for any rotor configuration $\rho$:
\begin{equation}\label{equation: Schramm bound}
\limsup_{n\to \infty} \frac{I(\rho,n)}{n} \leq \alpha_G,
\end{equation}
where $\alpha_G$ is the probability for  simple random walk starting at $o$ to never return to $o$.



The result of Schramm inspired Florescu, Ganguly, Levine, and Peres \cite{FGLP14} to ask  if there is always  a rotor configuration with  escape rate equal to $\alpha_G$.
 Such a configuration has been constructed  for certain choices of  $G$,
such as   for the
binary tree~\cite{LL09};  for  transient trees~\cite{AH11};
for $\Zb^d$ with $d\geq 3$~\cite{He14}; and  for  transient vertex-transitive graphs~\cite{Chan18}.

In this paper, we resolve  the question of Florescu et al.\ by constructing 
 a rotor configuration  with maximum escape rate for any given graph.  
We    focus on the case when $G$ is a transient graph,
as any rotor configuration on a recurrent graph has   escape rate equal to $0$ by \eqref{equation: Schramm bound}.

Let $\Gc:V \to \mathbb{R}_{\geq 0}$ be the \emph{Green function} of $G$,
which maps $x\in V$ to the expected number of visits to $x$ by the simple random walk on $G$ starting at $o$.
The
 \emph{weight} of a directed edge $(x,y)$ of $G$ is 
\begin{equation}\label{equation: weight}
w(x,y):= \frac{-1}{\deg(x)}\sum_{i=0}^{\deg(x)-1} i \frac{\Gc(y_{i+1})}{\deg(y_{i+1})},
\end{equation}  
where  $(x,y_i):=m_x^i(x,y)$ is the edge obtained by incrementing the edge $(x,y)$ for $i$ consecutive times by using the rotor mechanism at $x$.


\begin{theorem}\label{theorem: yes we can}
Let $G$ be a transient graph that is connected, simple, and locally finite.
If $\rho_{\min}$ is a rotor configuration such that, for any vertex $x$ and any outgoing edge $(x,y)$  of $x$, 
\begin{equation}\label{equation: minimizer condition}
 w(\rho_{\min}(x))\leq  w(x,y),
 \end{equation}
then 
\begin{equation*}
 \lim_{n \to \infty} \frac{I(\rho_{\min},n)}{n}=\alpha_G. 
 \end{equation*}
\end{theorem}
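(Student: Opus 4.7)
The plan is to interpret the function $w$ as a per-vertex potential whose cumulative change along the rotor walk exactly records the deviation of $I(\rho,n)$ from $n\alpha_G$. Concretely, letting $\rho^{(n)}$ denote the rotor configuration after the $n$-particle experiment and setting $\phi(z) := \Gc(z)/\deg(z)$, I aim to establish the exact identity
\[
I(\rho,n) \;=\; n\alpha_G \;+\; \alpha_G\,\deg(o) \sum_{x \in V}\bigl[w(\rho^{(n)}(x)) - w(\rho(x))\bigr].
\]
Once this is in place, the hypothesis \eqref{equation: minimizer condition} says that $w(\rho_{\min}(x))$ is the pointwise minimum of $w$ over edges out of $x$, so every summand on the right is nonnegative and $I(\rho_{\min},n) \geq n\alpha_G$ for every $n$. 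Schramm's upper bound \eqref{equation: Schramm bound} then sandwiches the escape rate and forces $\lim_{n\to\infty} I(\rho_{\min},n)/n = \alpha_G$.

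The identity is driven by a single-step calculation. Using the defining formula \eqref{equation: weight} together with the Poisson equation $\Gc(x) - \sum_{y \sim x}\phi(y) = \delta_o(x)$ for the Green function and the standard relation $\Gc(o) = 1/\alpha_G$, a direct manipulation shows that whenever the rotor at $x$ advances from $(x,y_0)$ to $(x,y_1) = m_x(x,y_0)$, one has
\[
w(x,y_1) - w(x,y_0) \;=\; \phi(x) - \phi(y_1) - \frac{\delta_o(x)}{\deg(x)}.
\]
Each step of the rotor walk is exactly such an advancement, with the particle moving from $x = x_t$ to $y_1 = x_{t+1}$. Summing the per-step increments in $w$ across all steps of all $n$ particles: the left-hand side collapses, vertex by vertex, to $\sum_x [w(\rho^{(n)}(x)) - w(\rho(x))]$; the contributions $\phi(x_t) - \phi(x_{t+1})$ telescope along each particle's trajectory, producing $\phi(o)$ per escaping particle and $0$ per returning particle, and hence $I(\rho,n)\phi(o) = I(\rho,n)/(\alpha_G\deg(o))$ in total; and the $\delta_o$ contributions sum to $n/\deg(o)$, since each particle fires at $o$ exactly once, namely on its initial step. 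Rearranging gives the displayed identity.

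The step I anticipate as the main obstacle is making these infinite manipulations rigorous on a general transient graph. The per-trajectory telescoping for an escaping particle implicitly requires $\phi(x_t) \to 0$ along its trajectory, which must be justified from the fact that the trajectory eventually leaves every finite set, and the outer sum over $V$ is a priori infinite since an escaping particle visits unboundedly many vertices. I would handle this by first carrying out the computation in a finite exhaustion $V_R \uparrow V$ with an absorbing boundary, in which the rotor experiment and every sum are manifestly finite and the identity is an honest equality, and then pass to $R \to \infty$ using the standard convergence of Green functions on transient networks. Once the identity survives this limit, the conclusion of the theorem follows in one line from \eqref{equation: minimizer condition} together with \eqref{equation: Schramm bound}.
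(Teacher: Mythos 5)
Your core idea is identical to the paper's: track the cumulative change in $w$ along the rotor walk via the harmonic conjugacy relation (your ``single-step calculation'' is the paper's equation \eqref{equation: martingale 2}, derived from \eqref{equation: dirichlet problem}). The difference is where you choose to instantiate that identity, and your choice creates a real problem.

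You aim for a $t=\infty$ equality, and you correctly anticipate that its rigor hinges on $\phi(X_t) = \Gc(X_t)/\deg(X_t) \to 0$ along each escaping trajectory. That convergence is \emph{not} a property of transient graphs in general: attach a recurrent ray $\Nb$ to the origin of $\Zb^3$; the resulting graph is still transient, but $\Pb_x[\text{hit } o]$, and hence $\Gc(x)$, stays bounded away from $0$ along the ray, so an escaping trajectory that keeps revisiting the ray need not push $\phi$ to zero. Consequently your claimed identity, as an exact equality, is false on general transient $G$ --- there is an extra nonnegative term $\sum_{i\,\text{escaping}} \lim_t \phi(X_t^{(i)})$ that does not vanish. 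Your proposed patch, a finite exhaustion $V_R$ with an absorbing boundary, has its own defect: once a particle is absorbed at $\partial V_R$, it stops advancing rotors, so the truncated dynamics diverges from the dynamics on $G$ from that moment onward, and the claim that the truncated quantities converge back to the quantities on $G$ would itself demand a separate argument.

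The paper avoids all of this by never taking the limit of the equality. It keeps the invariant $M_t$ (your identity, before collapsing anything) as an exact equation at every \emph{finite} $t$ (Proposition~\ref{proposition: martingale}), and then, in Proposition~\ref{proposition: occupation measure}, throws away the escapers' contribution $\sum_{i\notin S_t}\phi(X_t^{(i)}) \geq 0$ rather than evaluating its limit: nonnegativity of $\Gc$ is all that is used. Combined with \eqref{equation: minimizer condition}, this already gives $I_t(\rho_{\min},n)/n \geq \alpha_G$ for every $t\geq n$, and then $I(\rho_{\min},n) = \lim_t I_t(\rho_{\min},n)$ is an honest monotone limit along a single run of the experiment, needing no exhaustion. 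In short: you want an equality at infinity, the paper only needs an inequality at finite time, and the hypothesis \eqref{equation: minimizer condition} is precisely the one-sided input that makes the inequality close. Replacing your exact identity with the one-sided estimate at finite $t$ removes the gap and recovers the paper's proof.
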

 Theorem~\ref{theorem: yes we can} is proved by constructing an invariant of the rotor walk that balances between the Green function of the location of the particles and the weight of the edges in the rotor configuration at any given time.

Note that one can always construct  a rotor configuration $\rho$ satisfying \eqref{equation: minimizer condition}, by defining    $\rho(x)$ for any $x \in V$ to be the edge  for which its weight is the minimum among all outgoing edges of $x$.
Also note that \eqref{equation: minimizer condition} is not a necessary condition, as all   configurations with maximum escape rate from other works (mentioned above)   do not satisfy \eqref{equation: minimizer condition}.



\section{Proof of Theorem~\ref{theorem: yes we can}}

We now give  a formal definition to the experiment in Section~\ref{section: intro}.
Let $\rho$ be the initial rotor configuration, and let $n$ be the number of particles.
The location of the particles 
$X_t^{(0)}, X_t^{(1)}, \ldots, X_t^{(n-1)}$ and the rotor configuration $\rho_t$
at the $t$-th step of the experiment $(t\geq 0)$ are given by the following recurrence:
%
\begin{itemize}
\item Initially, $X_{0}^{(i)}:=o$ for  $i \in \{0,1,\ldots,n-1\}$ and $\rho_0:=\rho$;
\item Write $i_t:= t+1 \ \text{mod}\ n$. If the $i_t$-th particle has returned to $o$ (i.e. $X_t^{(i_t)}=o$ and $X_s^{(i_t)}\neq o$ for some $s<t$), then 
\[\rho_{t+1}:=\rho_t, \quad \text{and} \quad X_{t+1}^{(i)}:=X_t^{(i)} \ \text{ for  $i \in \{0,\ldots,n-1\}$}.  \]

\item If the $i_t$-th particle has not returned to $o$, then 
\begin{align*}
 \rho_{t+1}(x)&:=\begin{cases}m_x(\rho_t(x)) & \text{ if } x=X_t^{(i_t)};\\
\rho_t(x) & \text{ otherwise.}
\end{cases};  \\
X_{t+1}^{(i)}&:= \begin{cases}
\text{target vertex of } \rho_{t+1}(X_t^{(i)}) & \text{ if } i=i_t;\\
X_t^{(i)} & \text{ otherwise.}
\end{cases}
\end{align*}
\end{itemize}
That is, at time $t$, the $i_t$-th particle performs one step of a rotor walk if it has not returned to $o$, and  does nothing  if it has returned to $o$.

We denote by $R_t:=R_t(\rho,n)$ the \emph{range} of the experiment at time $t$,
\[ R_t:=\{X_{s}^{(i)} \mid i \in   \{0,1,\ldots,n-1\} \text{ and } s  \leq t  \}. \]

We now define an invariant of the rotor walk
that is a a special case of the invariant introduced in \cite[Proposition~13]{HP10}; a related invariant has been used in  \cite{HS11} and \cite{HS12} to study the rotor-router aggregation of  comb lattices.
Let  $M_t:=M_t(\rho,n)$ ($t\geq 0$) be given by:
\begin{equation}\label{equation: M_t}
  M_t:= \sum_{i=0}^{n-1} \frac{\Gc(X_t^{(i)})}{\deg(X_t^{(i)})} +\frac{\min \{t,n\} }{\deg(o)} +\sum_{x \in R_t}  \left(w(\rho_t(x))-w(\rho(x))\right). 
\end{equation}


\begin{proposition}\label{proposition: martingale}
For any initial rotor configuration $\rho$, any $n\geq 1$, and any $t\geq 0$,
we have 
\[ M_t= n \frac{\Gc(o)}{\deg(o)}.  \]
\end{proposition}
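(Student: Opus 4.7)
The plan is to prove the identity by induction on $t$, showing $M_{t+1} = M_t$ at every step. The base case $t=0$ is immediate: all particles sit at $o$, so the first sum equals $n\,\Gc(o)/\deg(o)$, while the middle term vanishes because $t=0$ and the range sum vanishes because $\rho_0 = \rho$. For the inductive step, if particle $i_t$ does not move --- which happens precisely when $t \geq n$ and $X_t^{(i_t)} = o$ --- then each of the three contributions to $M_t$ is manifestly unchanged (the middle term no longer grows once $t \geq n$), and there is nothing to prove.

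Suppose instead particle $i_t$ moves from $x = X_t^{(i_t)}$ to the target $y'$ of the incremented rotor $\rho_{t+1}(x) = m_x(\rho_t(x))$. Then the first sum of $M_t$ changes by $\Gc(y')/\deg(y') - \Gc(x)/\deg(x)$, the middle term changes by $\mathbf{1}_{\{t<n\}}/\deg(o)$, and the range sum changes by $w(\rho_{t+1}(x)) - w(\rho_t(x))$; indeed the only rotor that has moved is the one at $x$, and a newly revealed vertex $y' \in R_{t+1}\setminus R_t$ contributes zero since its rotor is still $\rho(y')$.

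The crux of the argument is then the algebraic identity
\[
w(\rho_{t+1}(x)) - w(\rho_t(x)) \;=\; \frac{\Gc(x) - \mathbf{1}_{\{x=o\}}}{\deg(x)} \;-\; \frac{\Gc(y')}{\deg(y')},
\]
which I would derive in two moves. First, telescoping the sum in~\eqref{equation: weight} under the index shift $i \mapsto i-1$ caused by applying $m_x$ reduces the left-hand side to $\frac{1}{\deg(x)}\sum_{z \sim x}\Gc(z)/\deg(z) - \Gc(y')/\deg(y')$. Second, the defining recurrence $\Gc(x) = \mathbf{1}_{\{x=o\}} + \sum_{z \sim x} \Gc(z)/\deg(z)$ for the Green function of simple random walk on $G$ rewrites this uniform average as $(\Gc(x) - \mathbf{1}_{\{x=o\}})/\deg(x)$.

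Adding the three contributions, the total change in $M_t$ collapses to $(-\mathbf{1}_{\{x=o\}} + \mathbf{1}_{\{t<n\}})/\deg(o)$, and it remains to observe that whenever a particle moves these two indicators agree: for $t < n$ the indices $i_0,\ldots,i_{n-1}$ are distinct, so particle $i_t$ has not yet had a turn and still sits at $o$; for $t \geq n$ the \emph{has returned} clause in the definition forbids a moving particle from being at $o$. The main obstacle is isolating the telescoping identity for $w$ and pairing it with the Green function recurrence; the rest is accounting.
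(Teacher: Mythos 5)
Your proposal is correct and follows the same route as the paper: a base case computation of $M_0$, then showing $M_{t+1}=M_t$ by casing on whether the active particle has returned, with the key step being the evaluation of $w(\rho_{t+1}(x))-w(\rho_t(x))$ via the definition~\eqref{equation: weight} and the Green-function identity~\eqref{equation: dirichlet problem}. Your explicit telescoping derivation and the remark that a newly revealed vertex contributes zero to the range sum merely spell out steps the paper leaves implicit.
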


We will use the fact that the Green function is a voltage function when a unit current enters $G$ through $o$  \cite[Proposition~2.1]{LP16}. That is, for any $x \in V$, 
\begin{align}\label{equation: dirichlet problem}
\frac{1}{\deg(x)}\sum_{y \sim x} \frac{\Gc(y)}{\deg(y)}= \frac{\Gc(x)}{\deg(x)}-\frac{\mathbbm{1}\{x=o\}}{\deg(o)},
\end{align}
where $y \sim x$ means that $y$ is a neighbor of $x$ in $G$.

\begin{proof}[Proof of Proposition~\ref{proposition: martingale}]
It follows directly from the definition that $M_0= n\frac{\Gc(o)}{\deg(o)}$.
Therefore it suffices to show that, for any $t\geq 0$:
\[ M_{t+1}-M_t=0.  \]

Recall that $i_t:= t+1 \text{ mod } n$.
Write
 $\alpha_t:=X_t^{(i_t)}$ and $\beta_{t}:=X_{t+1}^{(i_t)}$.
If the $i_t$-th particle has returned to $o$ by time $t$,
then no action is performed at time $t$, and $M_{t+1}=M_t$.
If the $i_t$-th particle has not returned to $o$ by time $t$,
then it follows  from the definition of $M_t$ and $M_{t+1}$ in \eqref{equation: M_t} that
\begin{equation}\label{equation: martingale 1}
\begin{split}
M_{t+1}-M_t=& \frac{\Gc(\beta_{t})}{\deg(\beta_{t})}-
 \frac{\Gc(\alpha_t)}{\deg(\alpha_t)}+ \frac{\mathbbm{1}\{t\leq n-1\}}{\deg(o)} \\
 &+w(\rho_{t+1}(\alpha_t))-w(\rho_{t}(\alpha_t)).
 \end{split}
\end{equation}
On the other hand, we have from the definition of $w$ in \eqref{equation: weight}  that
\begin{align*}
w(\rho_{t+1}(\alpha_t))-w(\rho_{t}(\alpha_t))=&\frac{-1}{\deg(\alpha_t)} \left( \deg(\alpha_t) \frac{\Gc(\beta_{t})}{\deg(\beta_{t})} -\sum_{y \sim \alpha_{t}} \frac{\Gc(y)}{\deg(y)}\right)\\
=&-\frac{\Gc(\beta_{t})}{\deg(\beta_{t})}+\frac{1}{\deg(\alpha_t)} \sum_{y \sim \alpha_{t}} \frac{\Gc(y)}{\deg(y)}.
\end{align*}
Applying  \eqref{equation: dirichlet problem} to the equation above then gives us
\begin{align}\label{equation: martingale 2}
w(\rho_{t+1}(\alpha_t))-w(\rho_{t}(\alpha_t))=&-\frac{\Gc(\beta_{t})}{\deg(\beta_{t})}+\frac{\Gc(\alpha_t)}{\deg(\alpha_t)}- \frac{\mathbbm{1}\{\alpha_t=o \}}{\deg(o)}.
\end{align}
Combining \eqref{equation: martingale 1} and \eqref{equation: martingale 2},
we then get
\[ M_{t+1}-M_t=\frac{\mathbbm{1}\{t\leq n-1\}}{\deg(o)}- \frac{\mathbbm{1}\{\alpha_t=o\}}{\deg(o)}.  \]

Since the $i_t$-th particle has not returned to $o$ yet by time $t$,
this means that the $\alpha_t=o$ if and only if $t\leq n-1$ (i.e., the $i_t$-th particle has not left $o$ yet).
This then implies $M_{t+1}-M_t=0$ by the equation above, and the proof is complete.
\end{proof}

Let $I_{t}(\rho,n)$ be the number of particles that have not returned to $o$ by time $t$.

\begin{proposition}\label{proposition: occupation measure}
If $\rho_{\min}$ is a configuration that satisfies \eqref{equation: minimizer condition},
then for   any $n\geq 1$ and any $t\geq n$,
\[ \frac{I_t(\rho_{\min},n)}{n}\geq   \alpha_G.  \]
\end{proposition}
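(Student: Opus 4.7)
The strategy is to specialize the invariant from Proposition~\ref{proposition: martingale} with $\rho = \rho_{\min}$ at times $t\geq n$, and then read off the lower bound by discarding non-negative contributions. Since $\min\{t,n\} = n$ at such times, Proposition~\ref{proposition: martingale} becomes
\[ n\frac{\Gc(o)}{\deg(o)} = \sum_{i=0}^{n-1}\frac{\Gc(X_t^{(i)})}{\deg(X_t^{(i)})} + \frac{n}{\deg(o)} + \sum_{x\in R_t}\bigl(w(\rho_t(x)) - w(\rho_{\min}(x))\bigr). \]

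Next I would split the particle sum according to whether $X_t^{(i)}=o$. A brief check of the dynamics shows that every particle has already moved at least once by time $n$ (the stopping clause requires a prior visit away from $o$, so each particle's first turn is a genuine step), and once a particle reaches $o$ after leaving, it remains there forever. Hence for $t\geq n$ the particles currently at $o$ are exactly those that have returned to $o$. Writing $I := I_t(\rho_{\min},n)$, these particles contribute $(n-I)\frac{\Gc(o)}{\deg(o)}$ to the particle sum; cancelling against the left-hand side yields
\[ I\,\frac{\Gc(o)}{\deg(o)} = \frac{n}{\deg(o)} + \sum_{i:\,X_t^{(i)}\neq o}\frac{\Gc(X_t^{(i)})}{\deg(X_t^{(i)})} + \sum_{x\in R_t}\bigl(w(\rho_t(x)) - w(\rho_{\min}(x))\bigr). \]

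The crucial step, and the only one that uses the hypothesis on $\rho_{\min}$, is to observe that both sums on the right are non-negative: the first because $\Gc\geq 0$, and the second termwise because $\rho_t(x)$ is itself an outgoing edge at $x$, so \eqref{equation: minimizer condition} yields $w(\rho_{\min}(x)) \leq w(\rho_t(x))$. Dropping these non-negative terms gives $I\cdot\Gc(o)\geq n$, i.e.\ $I_t/n \geq 1/\Gc(o)$. Since $1/\Gc(o) = \alpha_G$ by the classical fact that the number of visits of simple random walk to $o$ is geometric with mean $1/\alpha_G$, this is exactly the claim.

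I expect the only non-routine insight to be recognising that \eqref{equation: minimizer condition} is precisely the hypothesis that makes the rotor-weight perturbation in Proposition~\ref{proposition: martingale} a favourable (non-negative) quantity; once this is spotted, the remainder is algebraic rearrangement together with the small bookkeeping observation that for $t\geq n$ the stopped particles are exactly those currently located at $o$.
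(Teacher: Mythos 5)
Your proof is correct and follows essentially the same route as the paper's: specialize the invariant identity from Proposition~\ref{proposition: martingale}, use condition~\eqref{equation: minimizer condition} to discard the rotor-weight sum as non-negative, use non-negativity of the Green function to discard the non-returned particles' contribution, and rearrange to get $I_t/n \geq 1/\Gc(o)=\alpha_G$. The only cosmetic difference is that you split the particle sum by current position (which requires the small observation that for $t\geq n$ the particles at $o$ are exactly those that have returned), whereas the paper bounds the sum from below by the contribution over the returned set $S_t$ directly and never needs that claim.
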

\begin{proof}
Let $S_t \subseteq \{0,1\ldots,n-1\}$ be the set of particles that has returned to $o$ by time $t$.
Since the Green function is a nonnegative function, we have:
\begin{align}\label{equation: occupation 1}
  \sum_{i=0}^{n-1} \frac{\Gc(X_t^{(i)})}{\deg(X_t^{(i)})}\geq &  \sum_{i \in S_t} \frac{\Gc(X_t^{(i)})}{\deg(X_t^{(i)})}   =  \left(n-I_t(\rho_{\min},n) \right) \frac{\Gc(o)}{\deg(o)}.
\end{align}
Since $\rho_{\min}$ satisfies \eqref{equation: minimizer condition}, we also have 
\begin{equation}\label{equation: occupation 2}
\sum_{x \in R_t}  (w(\rho_t(x))-w(\rho_{\min}(x)))\geq 0. 
\end{equation}
Plugging \eqref{equation: occupation 1} and \eqref{equation: occupation 2} into the definition of $M_t$ in \eqref{equation: M_t}, we have that, for $t\geq n$,
\[ M_t\geq \left(n-I_t(\rho_{\min},n) \right) \frac{\Gc(o)}{\deg(o)} +\frac{n}{\deg(o)}, \]
which is equivalent to
\[ \frac{I_t(\rho_{\min},n)}{n}\geq  1+ \frac{1}{\Gc(o)}-\frac{\deg(o)}{n\Gc(o)} M_t.\]
Plugging in the value of $M_t$ from Proposition~\ref{proposition: martingale}, we then have:
\[ \frac{I_t(\rho_{\min},n)}{n}\geq   \frac{1}{\Gc(o)},\]
and the conclusion now follows by noting that $\alpha_{G}=1/\Gc(o)$.
%
\end{proof}

We now present the proof of Theorem~\ref{theorem: yes we can}.

\begin{proof}[Proof of Theorem~\ref{theorem: yes we can}]
We have for any $n\geq 1$,
\[ \frac{I(\rho_{\min},n)}{n}=\lim_{t\to \infty}  \frac{I_t(\rho_{\min},n)}{n}\geq \alpha_G, \]
where the inequality is due to Proposition~\ref{proposition: occupation measure}.
The theorem now follows by combining the inequality above with \eqref{equation: Schramm bound}.
\end{proof}

\section{Open problems}

\begin{enumerate}
\item Classify all $c\geq 0$ for which there exists a rotor configuration $\rho$ such that $\lim_{n \to \infty} I(\rho,n)/n=c$.

To date the only known result of this kind is due to Landau and Levine~\cite{LL09}, which shows that, for the complete binary tree, the constant $c$ can range from $0$ to $\alpha_G$.

\item Consider the random rotor configuration $\rho$ where $(\rho(x))_{x \in V}$ are  independent and uniformly distributed among the outgoing edges of $x$. 
What is the probability that $\rho$ has escape rate equal to $\alpha_G$?

Angel and Holroyd~\cite{AH11} showed that this probability is 1 if $G$  is a complete $b$-ary tree.
The  author~\cite{Chan18} also showed  the same result if $G$ is a transient vertex-transitive graph and  the  configuration is sampled  from the oriented wired spanning forest measure.
\end{enumerate}

\section*{Acknowledgement}
We would like to thank Lila Greco,  Dan Jerison, and Ecaterina Sava-Huss   for helpful comments on an earlier draft.

\bibliographystyle{alpha}
\bibliography{Schramm}

\end{document}